\title[{\small{\sc J. Balogh, Z.~F\"uredi, S.~Roy:   Sidon sets, \enskip
}}]{An upper bound on the size of Sidon sets}
\newtheorem{theorem}{Theorem}[section]
\newtheorem{lemma}[theorem]{Lemma}
\newtheorem{claim}[theorem]{Claim}
\theoremstyle{definition}
\newcommand{\Claim}[2]{\noindent {\bf Claim #1:} \emph{#2}}
\numberwithin{equation}{section}
\def\ZZ{\mathbb{Z}}
\def\NN{\mathbb{N}}
\def\FF{\mathbb{F}}
\def\A{\mathcal{A}}
\def\cP{\mathcal{P}}
\def\cA{{\mathcal{A}}}
\def\cE{{\mathcal{E}}}
\def\cL{{\mathcal{L}}}
\def\eps{\varepsilon}
\def\lll{t}   
\def\MMM{\ell}   
\def\mmm{m}  
\begin{document}

\author{J\'ozsef Balogh}
 \thanks{Department of Mathematical Sciences, University of Illinois at Urbana-Champaign, IL, USA. e-mail: \texttt{jobal@illinois.edu}. Research supported by NSF RTG Grant DMS-1937241, NSF Grant DMS-1764123 and Arnold O. Beckman Research Award (UIUC) Campus Research Board 18132, the Langan Scholar Fund (UIUC)  and the Simons Fellowship.}
 \author{Zolt\'an F\"uredi}
 \thanks{Alfr\' ed R\' enyi Institute of Mathematics, Budapest, Hungary.
   e-mail: \texttt{z-furedi@illinois.edu.} Research was supported in part by NKFIH grant KH130371 and NKFI–133819.}
\author{Souktik Roy}
  \thanks{Department of Mathematics, University of Illinois at Urbana-Champaign, Urbana, IL, USA. e-mail: \texttt{souktik2@illinois.edu}.}

\begin{abstract}
In this entry point into the subject, combining two elementary proofs, we decrease the gap between the upper
and lower bounds  by $0.2\%$ in
a classical combinatorial number theory problem. We show that the
maximum size of a Sidon set of $\{ 1, 2, \ldots, n\}$ is at most
$\sqrt{n}+ 0.998n^{1/4}$ for sufficiently large $n$.
\end{abstract}

\maketitle

\section{History}

In 1932 S.~Sidon asked a question of a fellow student P. Erd{\H{o}}s.
Their advisor was L.~Fej\'er, an outstanding mathematician (cf.~Fej\'er kernel) working on summability of infinite series, who  had a number of outstanding students who contributed to mathematical analysis
(M.~Fekete 1909 [Fekete's Lemma, see~\cite{FR}], G.~P\'olya 1912, John von Neumann 1926, P.~Erd\H os 1934, P. Tur\'an 1935, V.~T.~S\'os 1957). Sidon~\cite{S32} studying the $L_p$ norm of certain Fourier series proposed the
 following problem, which we present here in contemporary wording.

A set of numbers $A$ is  a {\em Sidon set}, or alternately a \emph{$B_2$-set}, if $a+b=c+d$, $a,b,c,d\in A$ imply $\{ a,b\}=\{ c,d\}$, i.e., all pairwise sums are distinct.

Let $S(n)$ denote the maximum size of a Sidon set can have when $A\subset \{ 1,2,\dots, n\} $ $= : [n]$. It is obvious that $S(n)\leq 2 \sqrt{n}$ since $\binom{|A|+1}{2}\leq 2n-1$. The sequence of powers of $2$, i.e.,  $1, 2, 4, 8,\dots$ is an (infinite) Sidon set showing $S(n)> \log_2 n$.
It is rather difficult to construct large Sidon sets, but Sidon found one showing $S(n)> n^{1/4}$.
Erd\H os immediately observed that the greedy algorithm gives $S(n)\geq n^{1/3}$ as follows.
If $A\subset [n]$ and $n> |A|^3$ then one can always find an $x\in [n]$ such that $x$ cannot be written as $x=a+b-c$, $a,b,c\in A$. Then $A\cup \{ x\}$ is a Sidon set as well.
In 1941, Erd\H{o}s and Tur\'an~\cite{ET41} observed that a result of Singer~\cite{S38} implies that $S(n)>\sqrt{n}$ infinitely many times. They also proved that
  \begin{equation*}
  S(n) \leq n^{1/2}+O(n^{1/4}).
\end{equation*}
Lindstr\"{o}m~\cite{L69}  showed in 1969,  that for all $n$
 \begin{equation}\label{eq142}
  S(n) < n^{1/2}+n^{1/4}+1.
\end{equation}
This was slightly improved by Cilleruelo~\cite{C10} in 2010
 \begin{equation}\label{eq143}
  S(n) < n^{1/2}+n^{1/4}+\frac{1}{2}.
\end{equation}
The study of Sidon sets 
became
a classic topic of additive number theory; see e.g., the survey by O’Bryant~\cite{B04}. The notion can be
extended in a natural way to (finite) groups and fields of characteristic zero see, e.g., Cilleruelo~\cite{C12}.
Many authors apply deep tools, although Ben Green~\cite{G01} notes that some approaches via Fourier analysis have the same spirit as the one in Erd\H{o}s-Tur\'an~\cite{ET41}.

The upper bound in~\eqref{eq142} has basically remained unmoved since 1969.
Erd\H{o}s~\cite{E94} offered \$500 for  a proof or disproof that for every $\eps>0$ the equality
  $S(n)< \sqrt{n} + o(n^\eps)$ holds.

\subsection{The aim of this paper, the main result}\label{ss_aim}

We improve the current best upper bound by $\Theta(n^{1/4})$.

\begin{theorem}\label{mainthm}
There exists a constant $\gamma \geq 0.002$ and a number  $n_0$ such that for every  $n> n_0$
 $$S(n) < n^{1/2}+n^{1/4}(1-\gamma).$$
\end{theorem}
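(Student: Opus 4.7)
The strategy is to sharpen the Lindstr\"om--Cilleruelo argument via a structural dichotomy: either the classical argument already delivers the desired bound, or the near-extremal configuration forces so much regularity on $A$ that a second elementary counting argument becomes strictly sharper. Suppose, for contradiction, that $A \subset \{1, 2, \ldots, n\}$ is a Sidon set with $|A| = k \geq n^{1/2} + (1 - \gamma)n^{1/4}$ for some $\gamma < 0.002$. For the \emph{first} inequality, choose an integer $\ell$ near $\sqrt{n}$ and set $f(x) = |A \cap [x - \ell + 1, x]|$ for $x \in [1, n + \ell - 1]$, so that $\sum_{x} f(x) = k\ell$. The Sidon property implies that for each $d \in [1, \ell - 1]$ at most one ordered pair $(a, a') \in A \times A$ satisfies $a - a' = d$; writing $\chi_d \in \{0, 1\}$ for the indicator of $d \in A - A$, one obtains $\sum_x \binom{f(x)}{2} \leq \sum_{d=1}^{\ell-1} (\ell - d)\chi_d$. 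Combined with Cauchy--Schwarz applied to $\sum_x f(x)$ on the support of size at most $n + \ell - 1$, this yields the classical bound $k \leq n^{1/2} + n^{1/4} + O(1)$.

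Next I would extract structural consequences from the hypothesis that $k$ exceeds the target. Under that assumption, the slack in the Cauchy--Schwarz step is at most $O(\gamma \cdot n^{1/4})$, which forces (i) all but $O(\gamma \cdot n^{1/4})$ of the indicators $\chi_d$ with $d \in [1, \ell - 1]$ to equal $1$, so that $A - A$ essentially contains the whole initial segment, and (ii) the window counts $f(x)$ to be tightly concentrated around their mean $k\ell/(n + \ell - 1) \approx 1$. In other words, $A$ is extremely regular on short scales. One then deploys a second elementary counting --- for instance, a variant of the first argument run with a shifted or rescaled window, or a Cilleruelo-type count of sums $a + b$ (for which the Sidon condition gives $r_{A+A}(x) \leq 2$) in place of differences --- to obtain a companion inequality whose slack in the structured regime is provably smaller than the Lindstr\"om slack. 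Under the regularity forced by Step 2, the companion inequality will contradict $k \geq n^{1/2} + (1 - \gamma)n^{1/4}$, completing the argument.

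The main obstacle will be making the structural conclusion quantitative enough and designing the second counting so that its improvement in the structured regime genuinely exceeds the $O(\gamma \cdot n^{1/4})$ threshold. Because the sought improvement is only a fraction of a percent of the $n^{1/4}$ term, every lower-order contribution must be tracked with care: boundary effects from the ends of $[1, n]$ and $[1, n + \ell - 1]$, the divisibility issues in the optimal choice of $\ell$, the exact decay of $(\ell - d)$ as $d \to \ell$, and the precise convexity loss in Cauchy--Schwarz. This is presumably where the explicit constant $\gamma \geq 0.002$ emerges after numerical optimization.
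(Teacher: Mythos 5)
Your high-level plan --- use the classical argument to force structure on a near-extremal $A$, then attack that structure with a second counting --- is the right shape, and matches the spirit of the paper's proof. But the proposal has three concrete problems that together leave a genuine gap.

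First, the window size is wrong. With $f(x)=|A\cap[x-\ell+1,x]|$ and $\ell\approx\sqrt{n}$, the average window count is $k\ell/(n+\ell-1)\approx 1$, so Jensen/Cauchy--Schwarz gives $\sum_x\binom{f(x)}{2}\gtrsim v\binom{1}{2}\approx 0$, which is vacuous; this pipeline with $\ell\approx\sqrt{n}$ does not even recover $k\le\sqrt{n}+n^{1/4}+O(1)$. The scale that makes the translate/degree argument bind (the paper's Theorem 3.2 via Johnson's inequality) is $\ell\approx n^{3/4}$, giving average degree $\approx n^{1/4}$; and the scale that makes Lindstr\"om's small-order-difference argument bind is $\ell\approx n^{1/4}$. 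Your $\ell\approx\sqrt n$ sits at neither and produces nothing.

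Second, the two halves of the dichotomy are never actually constructed. You say near-extremality forces ``regularity,'' but you do not say with respect to what observable, and you do not design the companion inequality --- it is left as ``a variant of the first argument run with a shifted or rescaled window, or a Cilleruelo-type count of sums.'' The paper's whole point is that the two countings are \emph{genuinely different in kind} and their losses are complementary: the Johnson-type degree argument (at scale $m\approx n^{3/4}$) loses a term $K=\sum_x(d_x-\bar d)^2$ (the variance of the translate degree sequence), while the Lindstr\"om argument (at scale $\MMM\approx n^{1/4}$) loses a slack $C$ measuring how much the small-order differences exceed the minimum packing $1+2+\cdots$. The structural quantity that threads between them is the number of elements of $A$ in the boundary intervals $[s]$, $[n+1-s,n]$ and $[m-s]$, $[n+1-m+s,n]$ with $s\approx\beta n^{3/4}$: if $A$ is light near the ends, $K$ is large; if $A$ is heavy near the ends but light in $[s+1,m-s]\cup[n+1-m+s,n-s]$, then many differences $a_j-a_i$ of order at most $\MMM$ jump across a long gap, making $C$ large. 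Your proposal has no analogue of either $K$ or $C$, and the conclusion ``window counts concentrated around their mean $\approx 1$'' is not a useful structural statement (it is close to automatic once the mean is $\approx 1$).

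Third, even the piece you do spell out --- ``all but $O(\gamma n^{1/4})$ of the $\chi_d$ with $d<\ell$ equal $1$'' --- does not lead anywhere on its own: knowing that almost every small difference is realized tells you little about the distribution of $A$ across $[n]$, which is what the second counting must exploit. In short, the proposal correctly intuits a ``tight case $\Rightarrow$ rigidity $\Rightarrow$ second inequality wins'' strategy, but the parameter choice kills the first inequality, and the rigidity statement and second inequality --- which are the actual content of the theorem --- are missing.
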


The proof is elementary. 
Since this bound is unlikely to be sharp, we did not optimize the computation.
In Section~\ref{sec_lind}, we recall Lindstr\"{o}m's argument~\cite{L69} and point out how introducing a slack term in a critical inequality  can imply possible improvements downstream.
In Section~\ref{sec:hypergraph} we present a different proof, a generalized version of an argument of Ruzsa~\cite{R93}.
We put his proof into a different framework and we explain, how introducing a slack term in a critical inequality there as well can lead  to a possible improvement.
In Section~\ref{sec_proof} we put the two proof methods together and leverage conditions on these slack terms to get Theorem~\ref{mainthm}.
In fact, we show that a very dense Sidon set must have large discrepancy on some initial segment of $[n]$, for related results see Cilleruelo~\cite{C12}, Erd\H{o}s and Freud~\cite{EF}.

Our second aim is to have a self-contained introduction to Sidon sets, so we describe a construction  matching the lower bound as well.
We also give two more examples for Sidon type problems, weak Sidon sets in Section~\ref{sec_weak} and
 thin Sidon sets in Section~\ref{sec_thin}, where our ideas can be used.
Overall, we do not think that our work deserves $0.2\%$ of Erd\H os' prize money, i.e., \$1,
 but we want to emphasize here the wonderful unity of mathematics by showing the many remarkable connections
 Sidon's problem has not only to Fourier analysis  but to abstract algebra and coding theory as well.

\section{Lindstr\"om's Upper Bound}\label{sec_lind}

In this Section we prove~\eqref{eq142}. Recall that $A$ is a Sidon set if all pairwise sums of its elements are distinct up to reordering.
\begin{theorem}[\cite{L69}]\label{X_lind}
Let $A \subseteq [n]$ be a Sidon set. Then $|A| < n^{1/2}+n^{1/4}+1.$
\end{theorem}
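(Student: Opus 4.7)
The plan is a double-counting argument with a sliding window of length $\ell$, for an integer parameter $\ell$ chosen at the end. Writing $k := |A|$, I define
$$N(v) := \bigl|\{(a,i) : a \in A,\ 0 \le i \le \ell - 1,\ a + i = v\}\bigr|$$
for each $v \in \ZZ$. Then $N$ is supported on at most $n + \ell - 1$ integers, and $\sum_v N(v) = k\ell$.

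The Sidon hypothesis enters through an upper bound on the collision sum $\sum_v N(v)(N(v)-1)$, which counts ordered pairs $\bigl((a_1, i_1), (a_2, i_2)\bigr)$ of distinct elements of $A \times [0, \ell - 1]$ with $a_1 + i_1 = a_2 + i_2$. Each such collision determines a nonzero difference $d = a_1 - a_2 = i_2 - i_1$ with $1 \le |d| \le \ell - 1$; since $A$ is Sidon, the value of $d$ fixes the ordered pair $(a_1, a_2) \in A^2$ uniquely, while $(i_1, i_2) \in [0,\ell-1]^2$ has exactly $\ell - |d|$ possibilities. Summing over $d$ yields $\sum_v N(v)(N(v)-1) \le 2\sum_{d=1}^{\ell - 1}(\ell - d) = \ell(\ell - 1)$. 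Cauchy--Schwarz applied to $\sum_v N(v) = k\ell$, with support of size at most $n + \ell - 1$, then gives the master inequality
$$k^2 \ell \;\le\; (n + \ell - 1)(k + \ell - 1).$$

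The final step is to choose $\ell$ well. Dividing by $\ell$ and rearranging turns the master inequality into $k^2 - k - n + 2 \le \ell + (n-1)(k-1)/\ell$, whose real-valued minimum over $\ell$ is $2\sqrt{(n-1)(k-1)} \approx 2n^{3/4}$, attained at $\ell \approx n^{3/4}$ when $k \approx \sqrt{n}$. I would therefore take the integer $\ell = \lceil n^{3/4} \rceil$, substitute this value back into the master inequality, and analyze the resulting quadratic inequality in $k$ to conclude $k < n^{1/2} + n^{1/4} + 1$ for sufficiently large $n$ (small $n$ being handled by the trivial bound $\binom{k+1}{2} \le 2n - 1$). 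The one obstacle worth flagging is precisely this last algebraic step: one must verify that the integer rounding of $\ell$, together with the lower-order terms $(n-1)/\ell$ and $(k-1)/\ell$, perturb the bound on $k$ by $o(1)$ rather than $\Omega(1)$, so that the $+1$ additive slack in the target bound is not consumed by rounding error.
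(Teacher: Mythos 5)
Your argument is correct, but it is not the paper's proof of this theorem: it is the Ruzsa--Cilleruelo translate argument, which the paper presents separately in Section~\ref{sec:hypergraph} as Theorem~\ref{X_thm32}. Your $N(v)$ is the degree $d_v$ of the vertex $v$ in the hypergraph $\{A,\,A+1,\,\ldots,\,A+(\ell-1)\}$ on ground set $[n+\ell-1]$; your collision bound $\sum_v N(v)\bigl(N(v)-1\bigr)\le \ell(\ell-1)$ is exactly the statement $|A_i\cap A_j|\le 1$ for $i\ne j$; and the Cauchy--Schwarz step reproduces Johnson's inequality (Theorem~\ref{setsthm} with $\lll=1$), down to the same choice $\ell\approx n^{3/4}$. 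The paper's proof of Theorem~\ref{X_lind} is Lindström's quite different double count: order the elements $a_1<\cdots<a_k$, call $j-i$ the order of the difference $a_j-a_i$, and compare the sum of all differences of order at most $\ell\approx n^{1/4}$ from below (they are distinct positive integers, so the sum is at least $1+2+\cdots$) and from above (the differences of a fixed order $r$ telescope, contributing at most $rn$). Both routes yield the same leading terms, but the slack they expose is of genuinely different character — Lindström's leaves a surplus $C(A,\MMM)$ in the sum of small-order differences, the translate argument leaves a variance term $K(\mathcal A)$ in the degree sequence — and exploiting the tension between the two is precisely the engine of Theorem~\ref{mainthm}. Your flagged worry about the final rounding is legitimate but harmless: the paper handles exactly this in Claim~\ref{le63}, which shows one can even push the additive constant down to $\tfrac12$.
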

\begin{proof}
Let the elements of $A$ be $a_1 < \ldots < a_k$. Given $i < j$, call $j-i$ the \emph{order} of the difference $a_j-a_i$. We sum all 
differences of order at most $\MMM $, where $\MMM $ will be chosen later to be around $n^{1/4}$. There are
$(k-1) + (k-2) + \ldots + (k- \MMM)=
\MMM (k-\frac{\MMM +1}{2})$ such differences. 
By the Sidon property these differences are all distinct. We get the lower bound
\begin{equation}\label{eqlind1}
\frac{1}{2}\MMM ^2\left(k-\frac{\MMM +1}{2}\right)^2 < 1+2+\ldots +\MMM \left(k-\frac{\MMM +1}{2}\right) \leq \sum_{1\leq i<j\leq k, \, j\le  i+\MMM} (a_j-a_i).
  \end{equation}
On the other hand, due to cancellation, the sum of all differences of order $r$  is at most $r\times n$.  We get a lower bound on the sum of differences of order at most $\ell$:
  $\binom{\MMM+1}{2} n$ for the right hand side of~\eqref{eqlind1}, hence  we obtain
\begin{equation}\label{X_sumdiff}
    \frac{1}{2}\MMM ^2\left(k-\frac{\MMM +1}{2}\right)^2 < \sum_{1\leq i<j\leq k, \, j\le  i+\MMM} (a_j-a_i) < \frac{1}{2}\MMM (\MMM +1)n.
\end{equation}
Rearranging, we get
   $k -\frac{\MMM +1}{2} < \sqrt{n (\MMM+1)/\MMM }$ \ which, using $\sqrt{1+x}\le 1+x/2$,  leads to
   \[  k < \sqrt{n} +  \frac{\sqrt{n}}{2\MMM } + \frac{\MMM }{2}+ \frac{1}{2}.
   \]
Substituting $\MMM := \lfloor n^{1/4} \rfloor$ yields the claimed upper bound.
\end{proof}

\subsection{A Possible Improvement, Slackness}\label{ss_lind}
If the set of differences $\{(a_j-a_i): 1\leq i<j\leq k, \, j\le  i+\MMM\}$ contains large values, then one can improve the upper bound on $k$. Define the non-negative slack term $C=C(A,\MMM )$
as follows
\begin{equation}\label{X_ineqL}
    C = C(A,\MMM ):= \left(\sum_{1\leq i<j\leq k, \, j\le i+\MMM} (a_j-a_i)\right) - \sum_{1\leq i \leq \MMM (k-({\MMM +1})/{2})} i.
    \end{equation}
 We can add $C$, or any lower bound for it, to the left hand side of~\eqref{X_sumdiff} to obtain
$$k-\frac{\MMM +1}{2} < \sqrt{\left(1+\frac{1}{\MMM }-\frac{2C}{\MMM ^2n}\right)n} .$$
Using $\sqrt{1+x} \leq 1+x/2$ we conclude
$$k < \sqrt{n} + \frac{\sqrt{n}}{2\MMM } + \frac{\MMM }{2} - \frac{C}{\MMM ^2\sqrt{n}} + \frac{1}{2}.
  $$
We shall substitute here $\MMM =(1-\alpha)n^{1/4}$  with some $0\leq \alpha <1$, and use it as follows
\begin{equation}\label{X_ineqL2}
    k < n^{1/2}+n^{1/4} - \frac{(2C/n) -n^{1/4}\alpha^2(1-\alpha)}{2(1-\alpha)^2} + \frac{1}{2}.
\end{equation}
This  clearly shows that if we knew $C=\Omega(n^{5/4})$ and $\alpha$ was sufficiently small, then Theorem~\ref{mainthm} would follow.

\section{Same Bound via Set Systems}\label{sec:hypergraph}

\subsection{An inequality from coding theory}
The following theorem is usually attri\-buted to Johnson~\cite{J62},
who used it to get upper bounds for error-correcting codes.
It was rediscovered several times, e.g., Bassalygo~\cite{Ba65}.
In hypergraph language (as in Lov\'asz's exercise book~\cite{LL}) it is a
 statement about the size of the ground set of a set system with bounded intersection sizes.
\begin{theorem}\label{setsthm}
Let $\mathcal{A}$ be a family  of $k$-sets $A_1,\dots,A_\mmm $ such that the intersection $A_i\cap A_j$ of any distinct pair has cardinality at most $\lll$. Then $v:= |\cup A_i| \geq \frac{k^2\mmm }{\lll m + k - \lll}.$
\end{theorem}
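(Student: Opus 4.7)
The plan is a standard double-counting argument on the incidence structure, in the spirit of the original Fisher/Johnson/Bassalygo bound. For each element $x \in \bigcup_i A_i$, let $d(x)$ denote its degree, i.e., the number of sets $A_i$ containing $x$. I would first write down two global identities/inequalities for the degree sequence.

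The easy identity is $\sum_x d(x) = km$, obtained by counting pairs $(x, A_i)$ with $x \in A_i$ in two ways (each $A_i$ contributes $k$). The key inequality uses the bounded-intersection hypothesis: counting triples $(x, A_i, A_j)$ with $i<j$ and $x \in A_i \cap A_j$ in two ways gives
\[
\sum_x \binom{d(x)}{2} \;=\; \sum_{1 \le i < j \le m} |A_i \cap A_j| \;\le\; t \binom{m}{2}.
\]

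Next I would apply convexity (Jensen, or equivalently Cauchy–Schwarz on the $d(x)$'s) to lower-bound the left-hand side in terms of $v$ and the known sum $km$:
\[
\sum_x \binom{d(x)}{2} \;\ge\; v \binom{km/v}{2} \;=\; \frac{km(km - v)}{2v}.
\]
Combining the two bounds yields $km(km - v) \le v \, tm(m-1)$, and a short rearrangement isolates $v$ on one side to produce
\[
v \;\ge\; \frac{k^2 m}{tm + k - t},
\]
which is exactly the claim.

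There is no real obstacle here beyond bookkeeping; the only point worth checking carefully is that the convexity step is valid, i.e., that the function $\binom{y}{2}$ treated as $y(y-1)/2$ is convex in $y$, so Jensen applied to the uniform measure on the $v$ elements of the union gives the stated lower bound even when $km/v$ is not an integer. Once that is in hand, the rest is purely algebraic manipulation and the inequality falls out directly.
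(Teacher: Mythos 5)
Your proof is correct and follows essentially the same double-counting-plus-convexity argument as the paper: the paper counts ordered pairs (including $i=j$) and bounds $\sum_x d_x^2 \ge (\sum_x d_x)^2/v$ via Cauchy--Schwarz, while you count unordered pairs $i<j$ and bound $\sum_x \binom{d(x)}{2}$ via Jensen, which are equivalent after the identity $d_x^2 = 2\binom{d_x}{2} + d_x$. The algebra rearranges to the same final inequality, so the two proofs differ only in bookkeeping.
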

\begin{proof}
Denote  $d_x$ the number of sets $A_i$ containing  the vertex $x$.
Given $A_i$, we have $\sum_j |A_i\cap A_j|\leq (m-1)t +k$.  This leads to the following chain of inequalities.
\begin{equation}\label{X_ineqF}
\lll \mmm(\mmm-1)+\mmm k \geq \sum_i \left(\sum_j |A_i\cap A_j|\right) = \sum d_x^2 \geq  \frac{(\sum d_x)^2}{v}= \frac{k^2\mmm^2}{v}.  
\end{equation}  \end{proof} \hfill

\subsection{A second combinatorial proof}\label{XXXXsec:hypergraph}

Here we prove~\eqref{eq143}.
The proof is a ge\-neralized version of an argument of Ruzsa~\cite{R93}, but we are going to use Johnson's inequality directly.
The slight improvement of Lindstr\"om's bound (1/2 instead of 1) is due to Cilleruelo~\cite{C10}
 who recalculated Ruzsa's argument more carefully. 

\begin{theorem}\label{X_thm32}
Let $A \subseteq [n]$ be a Sidon set. Then $|A| < n^{1/2}+n^{1/4}+\frac{1}{2}.$
\end{theorem}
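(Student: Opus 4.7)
The plan is to apply Johnson's inequality (Theorem~\ref{setsthm}) to the family of translates of the Sidon set $A$. For a positive integer $m$ to be chosen, set $A_j := A + j = \{a + j : a \in A\}$ for $j \in \{0, 1, \ldots, m-1\}$. These are $m$ subsets of size $k$ contained in the ground set $\{1, \ldots, n+m-1\}$ of cardinality $v = n+m-1$. Moreover, any two distinct translates share at most one element: if $A_i \cap A_j$ contained two distinct points $x = a + i = a' + j$ and $y = b + i = b' + j$ with $a, a', b, b' \in A$ and $i < j$, then $a - a' = b - b' = j - i$, so uniqueness of nonzero differences in a Sidon set forces $(a,a')=(b,b')$, contradicting $x \neq y$.

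Therefore Theorem~\ref{setsthm} with intersection parameter $t = 1$ gives
\[
n + m - 1 \;\geq\; \frac{k^2 m}{m + k - 1},
\]
which upon clearing denominators rearranges to the quadratic inequality
\[
Q(m) \;:=\; m^2 \;-\; (k^2 - n - k + 2)\,m \;+\; (n-1)(k-1) \;\geq\; 0
\]
that must hold at every positive integer $m$. I would argue by contradiction: supposing $k \geq n^{1/2} + n^{1/4} + 1/2$, the minimizer $m^\star = (k^2 - n - k + 2)/2 = \Theta(n)$ lies well inside the positive integers, and a direct expansion of the discriminant
\[
\Delta \;:=\; (k^2 - n - k + 2)^2 \;-\; 4(n-1)(k-1)
\]
at the threshold $k = n^{1/2} + n^{1/4} + 1/2$ yields $\Delta = 3n + O(n^{3/4}) > 0$. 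Hence $Q$ has two real roots separated by $\sqrt{\Delta} = \Theta(\sqrt{n}) \gg 1$, so some integer $m_0$ lies strictly between them, giving $Q(m_0) < 0$ and the sought contradiction.

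The main technical obstacle is the sharp additive constant $1/2$. The leading-order comparison (at scale $n^{5/4}$) only forces the coefficient of $n^{1/4}$ in $k - \sqrt{n}$ to be at most $1$; the improvement from Lindstr\"om's $+1$ down to $+1/2$ emerges only from carefully tracking the next-order ($n$-scale) terms in the expansion of $\Delta$ and verifying they have the correct sign at $c = 1/2$. This is precisely the refinement of Ruzsa's computation that Cilleruelo observed, and it is where the bookkeeping must be done rather than absorbed into an asymptotic error.
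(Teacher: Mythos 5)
Your plan matches the paper's argument precisely: apply Johnson's inequality to the translates $A, A+1, \dots, A+m-1$, view the resulting bound as a quadratic inequality in $m$, and show that the discriminant is large enough that some positive integer $m$ violates it --- this is exactly the ``more careful calculation'' the paper defers to Lemma~\ref{le62} and Claim~\ref{le63} in order to land the constant $\tfrac12$, and your expansion $\Delta = 3n + O(n^{3/4})$ agrees with the full identity $\Delta = 3n + 7n^{3/4} + \tfrac{15}{2}n^{1/2} + 4n^{1/4} + \tfrac{17}{16}$, so $\sqrt{\Delta}>1$ for every $n\ge 1$. One detail to make explicit: you check the discriminant only at the threshold $k = n^{1/2}+n^{1/4}+\tfrac12$, so you should add the (easy) monotonicity remark the paper uses in the proof of~\eqref{eq62} --- namely $g(x) := x^2 m/(m+x-1)$ is increasing for $x>0$ --- so that the contradiction obtained at the threshold carries over to every larger $k$.
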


\begin{proof}
Set $k:=|A|$, $\mmm$ a positive integer, and $A_i:=A+(i-1)$ for $i=1,\dots,\mmm$.
Note that $\cup A_i \subseteq [n+\mmm -1]$. The crucial observation is that $|A_i\cap A_j|\leq 1$ for $i\neq j$. Indeed, if there are two distinct elements $x,y \in A_i \cap A_j$ for some $i < j$ then there would exist elements $a,b,c,d \in A$ with $a+i=x=b+j$ and $c+i=y=d+j$, which would give us $a+d=b+c$. Since $A$ is a  Sidon set, this forces $\{a,d\}=\{b,c \}$ hence $x=y$, and Theorem~\ref{setsthm} is applicable with $\lll = 1$,  $v \leq n+\mmm -1$, and we get
$$(n+\mmm-1)(\mmm +k-1) \geq {k^2\mmm }.  $$
Suppose $k\geq n^{1/2}+n^{1/4}$ and define $m:= \lceil n^{3/4}\rceil$. Then $(n+m-1)/k < \sqrt{n}$, so we obtain  $\sqrt{n} (\mmm +k-1) > {km }. $
 This leads to
  $$ k < \sqrt{n} \frac{m-1}{m-\sqrt{n}}\leq n^{1/2} \frac{n^{3/4}-1}{n^{3/4}-n^{1/2}}=  n^{1/2}+n^{1/4}+1.$$
A more careful calculation, see Claim~\ref{le63}, completes the proof of Theorem~\ref{X_thm32}.
\end{proof}

\subsection{A possible improvement using variance}
Consider the family $\mathcal{A}$ in Theorem~\ref{setsthm}.
One can naturally get the idea that the lower bound in~\eqref{X_ineqF} can be improved if one knows that the variance of the degree sequence $\{d_x\}$ is large.
Define the non-negative {\it defect} term $K=K(\mathcal{A})$ as the difference
$$K(\mathcal{A}):=\sum d_x^2 - \frac{(\sum d_x)^2}{v}\quad =\sum_{x} (d_{ave}-d_x)^2.$$
We also call $K(\mathcal{A})$ or any lower bound $K$ of it a ``{\it gain}''.
Instead of~\eqref{X_ineqF} one obtains
\begin{equation}\label{ineqF}
\lll \mmm(\mmm-1)+\mmm k \geq   \frac{k^2\mmm^2}{v} + K.
\end{equation}
 Using  $\lll=1$ and  $v \leq n+\mmm -1$  we get
$$\mmm +k-1 > \frac{k^2\mmm }{n+\mmm } +\frac{K}{\mmm}. $$
This rearranges to
$$\left(k-\frac{n+\mmm }{2\mmm }\right)^2 < (n+\mmm )\left(1+\frac{\mmm+n}{4\mmm^2}-\frac{1}{\mmm}-\frac{K}{\mmm^2}\right)
 < (n+\mmm )\left(1-\frac{4K-n}{4\mmm^2}\right). $$
Define $\mmm:=\lfloor n^{3/4} \rfloor$, and suppose that $K< 2n^{3/2}$, as otherwise one could easily obtain much stronger results.
Using $\sqrt{1+x} \leq 1+x/2$, 
  we have
\begin{equation}\label{X_ineqF2}
    k <  \frac{n+\mmm }{2\mmm } + \sqrt{n}\cdot \sqrt{1+\frac{\mmm}{n} -\frac{4K-n}{4\mmm^2}- \frac{4K-n}{4\mmm n} } \leq  n^{1/2}+n^{1/4}-\frac{K}{2n}+2.
\end{equation}
We shall use later that if 
  $K> 2\gamma n^{5/4}+4n$, then~\eqref{X_ineqF2} implies Theorem~\ref{mainthm}.

\smallskip



\noindent Recall the following corollary of the Cauchy-Schwarz inequality about the variance of numbers.

\begin{lemma}\label{defect}
Let $(y_1, \ldots, y_v)$ be a sequence of real numbers with average $d$.
For a subset $X\subseteq [v]$ the average of the elements of $\{ d_x: x\in X\}$ is denoted by $d_X$. 
Then $\sum(d-y_i)^2 \geq |X| (d-d_X)^2$.
\end{lemma}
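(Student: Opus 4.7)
The plan is to derive the inequality from a single application of the Cauchy–Schwarz inequality to the sub-sequence indexed by $X$. First I would reconcile the notation: reading $d_x$ (for $x\in X$) as the value $y_x$, the hypothesis means that $d_X=\frac{1}{|X|}\sum_{x\in X}y_x$, so the right-hand side $|X|(d-d_X)^2$ equals $\frac{1}{|X|}\bigl(\sum_{x\in X}(d-y_x)\bigr)^2$.

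The main step is to apply Cauchy–Schwarz to the vectors $(d-y_x)_{x\in X}$ and the all-ones vector in $\RR^{|X|}$. This yields $\bigl(\sum_{x\in X}(d-y_x)\bigr)^2\le |X|\sum_{x\in X}(d-y_x)^2$, which after dividing by $|X|$ and using the identification above gives $|X|(d-d_X)^2\le \sum_{x\in X}(d-y_x)^2$. Finally, since every summand is non-negative, extending the sum from $X$ to all of $[v]$ only enlarges the right-hand side, producing the claimed bound $\sum_{i=1}^{v}(d-y_i)^2\ge |X|(d-d_X)^2$.

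There is essentially no technical obstacle here; the content of the lemma is the familiar fact that restricting the ``variance'' $\sum(d-y_i)^2$ to a sub-population $X$ already forces out the contribution from the shift of the sub-average, namely $|X|(d-d_X)^2$. The only point requiring care is notational: in the downstream application the sequence $\{y_i\}$ is itself a degree sequence $\{d_x\}$, so one must keep $d_X$ (the restricted average of the degrees) conceptually distinct from the letter $d$ used for the overall mean, and one must remember that we discard the contribution from indices outside $X$, which is where the inequality (as opposed to an identity) arises.
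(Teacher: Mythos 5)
Your proof is correct and essentially matches the paper's: both drop the non-negative terms outside $X$ and then observe that the restricted sum $\sum_{x\in X}(d-y_x)^2$ exceeds $|X|(d-d_X)^2$ by the non-negative quantity $\sum_{x\in X}(y_x-d_X)^2 = \sum_{x\in X} y_x^2 - |X|d_X^2$. The paper writes out this algebraic decomposition explicitly, while you invoke Cauchy--Schwarz on the centered vector $(d-y_x)_{x\in X}$, but these are the same standard variance inequality.
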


\begin{proof} 
We have
$$\sum_{i\in [v]} (d-y_i)^2 \geq
   \sum_{x\in X} (d-y_x)^2= |X| (d-d_X)^2 + \sum_{x\in X} y_x^2 -|X|d_X^2.
$$ \end{proof}

\section{Putting the two methods together:  Proof of Theorem~\ref{mainthm}}\label{sec_proof}

\begin{proof}[Proof of Theorem~\ref{mainthm}]
The proof is a combination of the proofs in the previous two sections.
Recall that $A=\{ a_1, a_2, \dots, a_k\}\subset [n]$ is a Sidon set,   $a_1 < \ldots < a_k$,
 $\mmm=\lfloor n^{3/4} \rfloor$ is a positive integer,
 $\mathcal{A}$ is the family $\{ A_i: A_i:=A+(i-1)$ for $i=1,\dots,\mmm\}$ with degree sequence  $\{ d_1, \dots, d_{n+m-1}\}$.
We may suppose that $n^{1/2}+ \frac{1}{2} n^{1/4}< k < n^{1/2}+ n^{1/4}+1/2$, hence the
average degree $d = \frac{k\mmm}{n+m-1} = n^{1/4}+O(1)$.

In this section we fix a ``small" $\alpha>0$ and a ``smaller" $\beta$, and an $ \eps$ to
 get a positive $\gamma$ satisfying~\eqref{eq41}.
E.g., one can choose $\alpha = 0.137$, $\beta = 0.037$, $\eps=0.235$
  and any $\gamma$ with $0.00204\geq \gamma> 0.002$, then these values satisfy
\begin{equation}\label{eq41}
\min\left\{{\eps^2\beta}  , \frac{2(1-\alpha-2\eps)^2(\alpha - 2\beta)-\alpha^2(1-\alpha)}{2(1-\alpha)^2} \right\} > \gamma .
\end{equation}
We also define $s=\lfloor \beta n^{3/4} \rfloor$, \ $r_1=|A \cap [s]|$,\  $r_2=|A \cap [n+1-s,n]|$, $r=r_1 + r_2$,
$R_1:=|A  \cap  [m-s]|$, $R_2:=|A \cap [n+1-m+s, n]|$, $R=R_1+R_2$, and   $\MMM =\lfloor (1-\alpha)n^{1/4}\rfloor$.
In the course of the proof we distinguish three cases:
$r\leq 2(1-\eps) n^{1/4}$, and $R\geq 2(1+\eps)n^{1/4}$, and $2(1-\eps) n^{1/4}< r\leq R< 2(1+\eps)n^{1/4}$.

\subsection{The density of the initial segments of $A$.}\label{ss41}\quad
The first main idea is to have  a closer look at the variance of the degree sequence of $\mathcal A$.

\medskip
\Claim{4.1}{If $r\leq 2(1-\eps) n^{1/4}$ then $K\geq 2\eps^2 \beta {n^{5/4}}+O(n)$.}\label{cl41}
\medskip

\begin{proof}
Let $d_X = ({\sum_{ x\in X} d_x})/|X|$ be the average of the degrees for a set  $X\subset [n+\mmm -1]$.
By the definition of the defect $K(\mathcal A)$ and by Lemma~\ref{defect} we have $K \geq |X|(d-d_X)^2$.
For $X=[s] \cup [n+m-s,n+m-1]$ we have   
 $$d_X\ =\ \frac{1}{|X|}\sum_{i\in X} d_i \ = \ \frac{1}{2s}\sum_{1\leq j \leq s} \left(|A \cap [j]| + |A \cap [n+1-j,n]|\right) $$ $$\leq \frac{1}{2} |A \cap ([s] \cup [n+1-s,n])|\ =\  \frac{r}{2} \ \leq \ (1-\eps)n^{1/4}.
  $$
Then, using \ $d-\frac{r}{2}\geq \eps{n^{1/4}}+O(1)$, \ $|X|=2s$,\  and \ $s =\beta n^{3/4}+O(1)$, \ we have the required bound.
\end{proof}

\medskip
\Claim{4.2}{If \ $R\geq 2(1+\eps) n^{1/4}$\  then\  $K\geq 2\eps^2 \beta {n^{5/4}}+O(n)$.}\label{cl42}
\medskip

\begin{proof}
Set $X=[\mmm-s+1,\mmm] \cup [n,n+s-1]$.
Every $x \in [\mmm-s+1,\mmm]$ gets covered $|A \cap [\mmm-s]|=R_1$ times just by the translates of $A\cap [\mmm-s]$. Similarly, every $x \in [n,n+s-1]$ gets covered at least $|A \cap [n+1-\mmm+s,n]|=R_2$ times.
Hence, $|d_X-d|\geq \frac{R}{2}-d \ge \eps n^{1/4}$,  and using \ $|X|=2s$,\  and \ $s =\beta n^{3/4}+O(1)$, \ with the  application of  Lemma~\ref{defect} completes the proof.
  \end{proof}


\subsection{Large gaps in $A$}\label{ss42}\quad
The segment $[s+1,\mmm -s]$ contains $R_1-r_1$ members of $A$, similarly $[n+1-\mmm +s,n-s]$ contains $R_2-r_2$
of them, which  adds up to $R-r$.
After having Claims~\ref{ss41} and~\ref{ss42} 
 we may assume that  $2(1-\eps) n^{1/4}< r\leq R< 2(1+\eps)n^{1/4}$,
hence $A\cap \left([s+1,\mmm -s] \cup [n+1-\mmm +s,n-s]\right)$ has fewer than $R-r< 4\eps n^{1/4}$ elements.
Using these assumptions, we shall slightly modify  the proof of Theorem~\ref{X_lind} by defining
$\MMM =\lfloor (1-\alpha)n^{1/4}\rfloor$.

The second idea of the proof is that with  $\MMM$ defined as above we can find many differences $a_j-a_i$ of small order which are significantly larger than $k\MMM$, hence we give a lower bound $C(A, \ell)$, which was defined in~\eqref{X_ineqL}, that provides the right hand side bound in \eqref{eq41}.

\medskip
\Claim{4.3}{If $2(1-\eps) n^{1/4}< r\leq R< 2(1+\eps)n^{1/4}$ and  $\MMM$, $k$, and $\mmm$ are defined as above, then $$
C(A, \ell)>  (1-\alpha-2\eps)^2(\alpha - 2\beta)n^{5/4}+O(n). $$
}\label{cl43}

\begin{proof}
Consider the pairs $(a_i,a_j)$ with $a_i\le s < \mmm-s < a_j$ and $j\leq i+\ell$.
Each such pair appears in the definition of $C(A,\ell)$ and each of such difference $a_j-a_i$ is at least
  $\mmm-2s$ which exceeds $\MMM (k-(\MMM +1)/2)$, as $\alpha > 2\beta$.
 Each such pair adds a gain  at least $\mmm-2s - (\MMM (k-(\MMM +1)/2)) \ge (\alpha-2\beta)n^{3/4}+O(n^{1/2})$ toward $C$ in~\eqref{X_ineqL}.

Given $a_i$ with $1\leq i\leq r_1$ we  choose $j$ as  $R_1< j \leq \ell +i$; there are
  $\ell-R_1+i$ possibilities, whenever this last quantity is positive.
Altogether we have at least $1+2+\ldots  +(\MMM-R_1+r_1)> (\MMM-R_1+r_1)^2/2$ such pairs.

Similarly, pairs $(a_i,a_j)$ with $a_i < n+1-\mmm+s < n+1-s \le a_j$ and $j\leq i+\ell$ gives us more than $(\MMM-R_2+r_2)^2/2$ such pairs.

Since\ $R_1-R_2-r_1-r_2=R-r\le   4\eps n^{1/4}$, the total number of pairs is at least  $(2\MMM-R+r)^2/4> (1-\alpha-2\eps)^2n^{1/4}+O(1)$,
we get the lower bound for the total gain as stated.
\end{proof}

{\it Completing  the proof of Theorem~\ref{mainthm}:\quad}
The constraints in the above three claims cover all possibilities for $A$.
In the cases covered by Claims~\ref{ss41} and~\ref{ss42} we have
 got a large  defect $K(\A)$ so inequalities~\eqref{eq41} and~\eqref{X_ineqF2}
 establish the required upper bound for $k$.
In the case covered by Claim~\ref{ss43} the large slackness term $C$ in the
inequality~\eqref{X_ineqL2} completes the proof of Theorem~\ref{mainthm}.
\end{proof}

\subsection{Remark}\label{ss43}
  Note that  one could optimize somewhat better the parameters $\alpha, \beta$, etc., but we did not see the point computing more digits of the optimal values.
 Much more refining should be possible by exploring the structure of a Sidon set more thoroughly; i.e., giving further conditions on the number of elements in some intervals.  For example, when $a\in A$ is close to $s$,  then one can improve the bounds from Claim~\ref{cl41}, as such $a$ will not contribute much to the degrees  of vertices in $X$. From the other side, when $a\in A$ is close to $1$, then $a$ will participate in larger gaps, and one can improve Claim~\ref{cl43}. The computation is rather delicate, and it improves the bound on $\gamma$ to $0.00342$. It is likely that it could be improved a bit further, with an additional analysis of the structure of $A$.

 There are discussion of the structure of dense Sidon sets in the blog of Gowers~\cite{gowers}, though probably there is no  obvious connection toward our proof.
It seems that further ideas are needed to get rid of the $n^{1/4}$ term in its entirety.

\section{Analogous Improvements for Weak Sidon Sets}\label{sec_weak}
A set of integers $A = \{ a_i \}$ is  a \emph{weak Sidon} set if all the pairwise sums $a_i+a_j$ are different for $i<j$. Note that for Sidon sets we 
  need this to hold for $i \leq j$. Denote by $W(n)$ 
the size of the largest weak Sidon subset of $[n]$. The current best upper bound
\begin{equation}\label{eq5W} 
W(n) \leq n^{1/4}+\sqrt{3}n^{1/4}+O(1)
  \end{equation} 
is due to Kayll~\cite{K04}, who used a method analogous to Lindstr\"{o}m's~\cite{L69}. One could prove  \eqref{eq5W}  with the set systems method described in Section~\ref{sec:hypergraph}, see Ruzsa~\cite{R93}. Similarly,  as for the Sidon sets, exploiting the tension between the two arguments, we  get a slight improvement in the coefficient of the $n^{1/4}$ term.

\begin{theorem}\label{X_weakthm}
There exists a constant $\gamma \geq 0.0089$ such that $W(n) \leq n^{1/2}+n^{1/4}(\sqrt{3}-\gamma) + O(1)$.
\end{theorem}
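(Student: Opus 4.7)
The plan is to mirror the proof of Theorem~\ref{mainthm} by extracting slack and defect terms from the two known proofs of~\eqref{eq5W} (Kayll's Lindstr\"om-type argument and the corresponding Ruzsa-style set-systems argument), and then running an analogous three-case analysis on the density of $A$ near the boundary of $[n]$. Kayll's argument, like that of Section~\ref{sec_lind}, sums the differences $a_j-a_i$ with $j-i\le\ell$ and bounds the total by $\binom{\ell+1}{2}n$ via telescoping. The new ingredient in the weak Sidon setting is that each difference value is taken by at most two pairs---a third repetition would yield a $4$-term arithmetic progression or a forbidden coincidence of sums of distinct pairs---and each doubling corresponds to a $3$-AP in $A$, of which there are at most $|A|$ (since a weak Sidon set has at most one $3$-AP per middle element). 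These two constraints tighten the combinatorial minimum of the $N=\ell(k-(\ell+1)/2)$ differences and reproduce the coefficient $\sqrt{3}$. Defining the slack
\[
C_W(A,\ell) := \sum_{1\le i<j\le k,\,j\le i+\ell}(a_j-a_i)\;-\;M_W(k,\ell),
\]
with $M_W(k,\ell)$ the combinatorial minimum under the above constraints, a weak-Sidon analog of~\eqref{X_ineqL2} shows that $C_W=\Omega(n^{5/4})$ yields the target improvement.

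For the set-systems side, take $\mathcal{A}=\{A_i:=A+(i-1):i\in[m]\}$ with $m=\lfloor\mu n^{3/4}\rfloor$ for a suitable $\mu$. A direct argument via the same $3$-AP analysis gives $|A_i\cap A_j|\le2$, and crucially $|A_i\cap A_j|=2$ occurs only when $j-i$ is the common difference of a $3$-AP in $A$---at most $k-2$ values of $j-i$. This sharper accounting refines the bound $\sum_x d_x^2\le m(2(m-1)+k)$ that feeds Johnson's inequality, and combined with Lemma~\ref{defect} produces a defect $K_W(\mathcal{A})$ analogous to the one in~\eqref{ineqF}. As in Subsection~3.3, $K_W=\Omega(n^{5/4})$ again yields the target bound.

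Third, I would run a three-case analysis analogous to Section~\ref{sec_proof}. Set $s:=\lfloor\beta n^{3/4}\rfloor$, $\ell:=\lfloor(1-\alpha)n^{1/4}\rfloor$, and let $r=|A\cap[s]|+|A\cap[n+1-s,n]|$ and $R=|A\cap[m-s]|+|A\cap[n+1-m+s,n]|$. Since the average degree $d=km/(n+m-1)$ is $n^{1/4}+O(1)$ even when $k$ is near $\sqrt{n}+\sqrt{3}\,n^{1/4}$, the reference value for $r$ and $R$ is still $2n^{1/4}$. In the extreme cases $r\le2(1-\eps)n^{1/4}$ or $R\ge2(1+\eps)n^{1/4}$, Lemma~\ref{defect} applied to an appropriate edge window produces $K_W=\Omega(n^{5/4})$, closing those cases through the defect inequality. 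In the balanced case $2(1-\eps)n^{1/4}<r\le R<2(1+\eps)n^{1/4}$, few elements of $A$ lie in the middle blocks $[s+1,m-s]\cup[n+1-m+s,n-s]$, so many pairs of small order cross between the outer blocks and contribute differences much larger than the combinatorial minimum; this forces $C_W=\Omega(n^{5/4})$ and closes the last case via the slack inequality. The parameters $\alpha,\beta,\eps$ are tuned to satisfy a $\sqrt{3}$-shifted analog of~\eqref{eq41}, delivering $\gamma\ge0.0089$.

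The hard part, I expect, is recovering the full coefficient $\sqrt{3}$ on the set-systems side. A direct Johnson application with $t=2$ gives only $k\le\sqrt{2n}+O(n^{1/4})$, far weaker than $\sqrt{n}+\sqrt{3}\,n^{1/4}$; to get to $\sqrt{3}$ one must weight the ``$|A_i\cap A_j|=2$'' cases by the scarcity of $3$-APs (at most $k-2$ bad values of $j-i$) and use $|A_i\cap A_j|\le1$ for the rest, producing a much sharper bound on $\sum_x d_x^2$. A secondary issue is the numerical optimization: with $\sqrt{3}$ in place of the Lindstr\"om coefficient $1$, the analog of~\eqref{eq41} has different coefficients, and the optimal $\alpha,\beta,\eps$ must be rechosen to force $\gamma\ge0.0089$.
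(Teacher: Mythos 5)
Your plan is structurally the same as the paper's: both extract a slack $C$ from a Kayll-style difference count and a defect $K$ from the set-systems count, both observe that a weak Sidon set has at most $k-2$ repeated differences (one $3$-AP per middle element), both recover the $\sqrt{3}$ coefficient on the set-systems side by charging the $|A_i\cap A_j|=2$ events only to those $\leq k-2$ bad shift values $j-i$, and both close with the same three-case analysis on initial-segment density. You correctly identify the one genuinely new combinatorial ingredient (the $3$-AP scarcity) as the ``hard part.''

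However, two of your concrete parameter choices are wrong and, as written, would cause the numerics to fail. First, you set $\MMM=\lfloor(1-\alpha)n^{1/4}\rfloor$ as in the Sidon case, but Kayll's optimization in the weak setting forces $\MMM\approx\sqrt{3}\,n^{1/4}$: with $\MMM=(1-\alpha)n^{1/4}$ the Lindstr\"om-side coefficient of $n^{1/4}$ is
$\tfrac{1-\alpha}{2}+\tfrac{3}{2(1-\alpha)}\geq 2>\sqrt{3}$,
so you cannot even match, let alone beat, the $\sqrt{3}$ benchmark from that side. The paper uses $\MMM=\lfloor(\sqrt{3}-\alpha)n^{1/4}\rfloor$. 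Second, you assert the average degree $d=km/(n+m-1)$ is $n^{1/4}+O(1)$ and hence take reference value $2n^{1/4}$ for $r$ and $R$; but with $m=\mu n^{3/4}$ and $k\approx\sqrt{n}$ one has $d\approx\mu n^{1/4}$, and the set-systems side requires $\mu=\sqrt{3}$ (the minimizer of $\mu+3/\mu$), so $d\approx\sqrt{3}\,n^{1/4}$ and the reference must be $2\sqrt{3}\,n^{1/4}$, which is what the paper uses ($r>2(\sqrt{3}-\eps)n^{1/4}$, $R<2(\sqrt{3}+\eps)n^{1/4}$). Once both slips are repaired the argument matches the paper's and the optimization with $\alpha=0.273$, $\beta=0.068$, $\eps=0.363$ yields $\gamma\geq 0.0089$.
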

The proof is almost identical to the proof of Theorem~\ref{mainthm}, therefore we just sketch it, highlighting only the salient differences.

\subsection{Counting differences up to order $\MMM $}
We proceed almost exactly as in Section~\ref{sec_lind} and we use  the same notation. Take a weak Sidon set $1\leq a_1 < \ldots < a_k\leq n$ and sum all positive differences of order at most $\MMM $.
The important difference is that some of these differences may be repeated.
Let
$$\cP:= \left\{ p: 0< p=a_j-a_i = a_{j'}-a_{i'} \text{ for some } \{i,j\}\ne \{i',j'\}\right\}$$ be the set of repeated distances.
If a difference is repeated, then the participating members form an arithmetic progression of length $3$,
 either $a_i=a_{j'}$ or $a_j=a_{i'}$, so each member of $\cP$ is repeated exactly twice.
Additionally, each member of $A$ could be at most once in the middle of such progression, implying
  $|\cP|\leq k-2$.
Hence, in the left hand side of~\eqref{eqlind1} we can have the sum of positive integers only up to $\MMM (k-(\MMM+1)/2)-(k-2)$.
Instead of   (\ref{X_ineqL}) we obtain  the following.

\begin{equation*}
    \frac{1}{2}\MMM ^2(k-(\MMM +1)/2)^2 + C \leq \frac{1}{2}\MMM (\MMM +1)n + \MMM k^2,
\end{equation*}

\noindent where $C$ is any lower bound for
 $\sum \max\{ a_j-a_i -(\MMM-1)k, 0\}$ for $i<j\leq i+\MMM$.
We set $\MMM =(\sqrt{3}-\alpha)n^{1/4}$, a bit less than what Kayll's proof used and obtain

\begin{equation}\label{X_ineqWL2}
    k \leq n^{1/2}+\sqrt{3}n^{1/4}+\frac{n^{1/4}\alpha^2}{2(\sqrt{3}-\alpha)} - \frac{C}{(\sqrt{3}-\alpha)^2n} +O(1).
\end{equation}

\subsection{Double counting with set systems}
We  
proceed as in Section~\ref{sec:hypergraph}, with $A_1=A$ being our weak Sidon set, $A_i=A+(i-1)$ for $i \in [\mmm]$ and $\mmm =\lfloor \sqrt{3}n^{3/4}\rfloor$.
Since  
 there are at most $k-2$ repeated distances, we conclude that the number of pairs of sets with intersection of size $2$ is at most $k\mmm$, and there is no
 intersection of size larger than $2$. 
We get the following version of (\ref{ineqF}).
\begin{equation*}
\mmm(\mmm-1)+2\mmm k \geq \sum_i \left(\sum_j |A_i\cap A_j|\right) = \sum d_x^2 = \frac{1}{v} \left(\sum d_x\right)^2 +K.
\end{equation*}
This turns into the following version of (\ref{X_ineqF2}).

\begin{equation}\label{X_ineqWF2}
    k \leq n^{1/2}+\sqrt{3}n^{1/4}-\frac{K}{2n}+O(1).
\end{equation}

\subsection{Proof of Theorem \ref{X_weakthm}}  The analogues of  Claims~\ref{ss41} and~\ref{ss42}
imply that we can assume that $[s] \cup [n+1-s,n]$ contains more than $2(\sqrt{3}-\eps)n^{1/4}$ elements of $A$ and $[s+1,\mmm-s] \cup [n-m+s+1,n-s]$ contains fewer than $4\eps 
        n^{1/4}$ such elements, otherwise $\gamma> \beta\eps^2$.
Hence,  the number of pairs $1 \leq i < j \leq i+\MMM $ with $a_j - a_i > \mmm-2s$ is at least ${(\sqrt{3}-\alpha-2\eps)^2}\sqrt{n} + O(n^{1/4})$.
This gives us $C \geq  (\sqrt{3}-\alpha-2\eps)^2(\alpha - 2\beta)n^{5/4}-O(n)$ in (\ref{X_ineqWL2}).
We obtain an improvement $\gamma$ in~\eqref{eq5W}
  in the constant in front of $n^{1/4}$ when
$$\min\left\{ {\beta\eps^2}, \frac{(\sqrt{3}-\alpha-2\eps)^2(\alpha - 2\beta)}{(\sqrt{3}-\alpha)^2} - \frac{\alpha^2}{2(\sqrt{3}-\alpha)}  \right\}> \gamma .$$
Choosing $\alpha = 0.273$, $\beta = 0.068$, $\eps=0.363$ makes the left hand side larger than 0.00896, as claimed in  Theorem~\ref{X_weakthm}.
\qed



\section{$\lll$-thin Sidon sets}\label{sec_thin}

A set of integers $A$ is  a $\lll$-\emph{thin Sidon set} if
  $|A \cap (A+c)|\leq \lll$ for every $c\neq 0$,  where $A+B:= \{ a+b: a\in A, B\in B\}$ and $A+c:= A+ \{ c\}$.
In other words, for every $c$, the equation $a_i-a_j=c$ has at most $\lll$ different solutions $a_i,a_j\in A$.
The case $\lll =1$ corresponds exactly to the Sidon sets.
A related notion, the so-called representation function
 $r_A(x) := |\{(a, a')\in A\times  A : x = a + a'\}|$ where $A$ is a subset in a field
 of characteristic zero, is heavily investigated, see, e.g., Cilleruelo~\cite{C12}.
Let $S_\lll(n):=\max\{ |A|: A \subseteq [n],$ $A$ is $\lll$-thin$\}$.
Caicedo, Martos,  and Trujillo proved that
\begin{theorem}[\cite{CMT}]\label{thm61}
We have $S_\lll(n)= (1+o(1))\sqrt{\lll n}$
while $\lll$ is fixed and $n\to \infty$.
\end{theorem}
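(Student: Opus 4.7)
The plan is to prove matching upper and lower bounds, both equal to $(1+o(1))\sqrt{\lll n}$.

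For the upper bound I would essentially repeat the proof of Theorem~\ref{X_thm32}, but apply Theorem~\ref{setsthm} at intersection parameter $\lll$ instead of $1$. Given an $\lll$-thin set $A\subseteq[n]$ of size $k$, form the translates $\A=\{A_i:=A+(i-1):i\in[m]\}$; these are $k$-sets inside $[n+m-1]$, and the $\lll$-thin condition is exactly $|A_i\cap A_j|=|A\cap(A+(j-i))|\le\lll$ for $i\ne j$. So Theorem~\ref{setsthm} yields
\[
(n+m-1)\bigl(\lll m+k-\lll\bigr)\ \ge\ k^2m.
\]
Dividing by $m$ and rearranging gives $k^2\le \lll n+\lll m+nk/m+k+O(1)$. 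Balancing $\lll m$ against $nk/m$ by AM--GM at $m=\lceil\lll^{-1/4}n^{3/4}\rceil$ (the natural analogue of $m=\lceil n^{3/4}\rceil$ used in the Sidon case), this collapses to $k^2\le\lll n+O_{\lll}(n^{3/4})$, so $k\le(1+o(1))\sqrt{\lll n}$.

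For the matching lower bound I would exhibit an $\lll$-thin set of size $(1-o(1))\sqrt{\lll n}$ using a $(v,k_0,\lll)$-difference set $B\subseteq \ZZ/v$: a $k_0$-subset such that every nonzero element of $\ZZ/v$ is represented exactly $\lll$ times as $b-b'$. Parameter counting forces $k_0(k_0-1)=\lll(v-1)$, so $k_0=(1+o(1))\sqrt{\lll v}$. Lifting $B$ to $[0,v-1]\subseteq[n]$ preserves $\lll$-thinness in the integer sense, because for each $c\ne 0$ the integer pairs $(b,b')$ with $b-b'=c$ are a subset of the $\lll$ residue-level pairs with $b-b'\equiv c\pmod v$. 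Classical families (Singer for $\lll=1$; Hadamard, McFarland, or biplane-type constructions at larger $\lll$) place admissible $v$ densely enough in $\NN$ that for each fixed $\lll$ one can always pick $v\in[n-o(n),n]$, giving $S_\lll(n)\ge(1-o(1))\sqrt{\lll n}$.

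The hard part is the density of difference-set families, not the upper-bound computation (which is essentially Theorem~\ref{X_thm32} with a rescaled Johnson inequality). If for some $\lll$ no classical $(v,k_0,\lll)$-family runs densely enough through $\NN$, one would instead construct $A$ as the union of $\lll$ carefully chosen disjoint near-extremal Sidon sets in $[n]$, with translates/dilates arranged so that pairwise cross-differences coincide only $O(1)$ times, and then delete $o(\sqrt{\lll n})$ elements to enforce the $\lll$-thin condition exactly; this still yields $|A|=(1-o(1))\sqrt{\lll n}$.
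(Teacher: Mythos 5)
Your upper bound is essentially the paper's own argument: apply Theorem~\ref{setsthm} to the $m$ translates $A+(i-1)$ with intersection bound $\lll$, then optimize $m\approx \lll^{-1/4}n^{3/4}$ to get $k\le\sqrt{\lll n}+O((\lll n)^{1/4})$; the paper packages the same computation in Claim~\ref{le63} and inequality~\eqref{eq62}. That half is fine.

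The lower bound, however, has a genuine gap. You appeal to classical $(v,k_0,\lll)$-difference sets, but for \emph{fixed} $\lll\ge 2$ no dense family of such objects is known. The case $\lll=2$ (biplanes) is a notorious open problem --- only finitely many biplanes have ever been constructed, and whether infinitely many exist is unresolved. The Hadamard and McFarland families you cite have $\lll$ growing with $v$ (for Hadamard parameters $(4t-1,2t-1,t-1)$ one has $\lll\sim v/4$; McFarland gives $\lll=q^d(q^d-1)/(q-1)$), so they give no information for a fixed target $\lll$. Thus the premise \textquotedblleft{}classical families place admissible $v$ densely enough\textquotedblright{} is false for $\lll\ge 2$. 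Your fallback --- the union of $\lll$ disjoint near-extremal Sidon sets in $[n]$ --- is dimensionally wrong: such a union would have $\sim\lll\sqrt{n}$ elements, which for $\lll\ge 2$ strictly exceeds the ceiling $\sqrt{\lll n}=\sqrt{\lll}\sqrt{n}$ that your own upper bound forbids, so no deletion of $o(\sqrt{\lll n})$ elements can rescue it; the construction simply cannot be $\lll$-thin at that size.

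The paper avoids both problems by \emph{not} using perfect difference sets. It modifies the Bose--Chowla Sidon set: for $q$ a prime power with $\lll\mid q-1$ it builds a $q$-element set $A_{q,\lll}\subset\ZZ_{(q^2-1)/\lll}$ in which each nonzero residue is hit as a difference at most $\lll$ times (exactly $\lll$ times unless divisible by $q+1$), as sketched in~\eqref{eq64} and the Appendix. This is weaker than a difference set but exactly what $\lll$-thinness requires, and Dirichlet supplies infinitely many primes $q\equiv 1\pmod\lll$, so the admissible moduli $(q^2-1)/\lll$ are dense enough among integers. If you want a correct elementary lower bound, replace your difference-set step with this Bose--Chowla variant; the rest of your outline then goes through.
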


Using Theorem~\ref{setsthm} we prove in the next subsection the following upper bound.
 \begin{equation}\label{eq62}
 S_\lll (n) < \sqrt{\lll n} + (\lll n)^{1/4} +\frac{1}{2}.
 \end{equation}
The method of Section 4 gives the following improved upper bound; we omit it's proof.
\begin{theorem}\label{thm66}
There exist a constants $\gamma =\gamma_\lll >0$ and $n_t$ such that $S_\lll (n) \leq (\lll n)^{1/2}+(\lll n)^{1/4}(1-\gamma)$ for every
$n> n_{\lll}$.  \hfill \qed
\end{theorem}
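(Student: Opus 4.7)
The plan is to mirror the three-case argument of Section~\ref{sec_proof}, with every $n^{1/4}$ replaced by $(\lll n)^{1/4}$ and every $n^{3/4}$ replaced by $n^{3/4}/\lll^{1/4}$. Two modifications enter: in the Lindstr\"om-type sum each difference $a_j-a_i$ of order $\le\MMM$ now repeats at most $\lll$ times (by the $\lll$-thin condition), so the lower bound for $\sum(a_j-a_i)$ becomes $\MMM^2(k-(\MMM+1)/2)^2/(2\lll)+O(\MMM k)$ rather than $\MMM^2(k-(\MMM+1)/2)^2/2$; and the translates $A_i:=A+(i-1)$ satisfy $|A_i\cap A_j|\le\lll$, because the equation $a-b=j-i$ has at most $\lll$ solutions in $A$, so Theorem~\ref{setsthm} applies with $t=\lll$.

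Setting $\MMM:=\lfloor(1-\alpha)(\lll n)^{1/4}\rfloor$ and $\mmm:=\lfloor n^{3/4}/\lll^{1/4}\rfloor$, I would compute the analogues of~\eqref{X_ineqL2} and~\eqref{X_ineqF2}. Because $\MMM^2=(1-\alpha)^2\sqrt{\lll n}$ and $\mmm^2=n^{3/2}/\sqrt{\lll}+O(n^{3/4})$, the extra factors of $\lll$ cancel, yielding
\[
k<\sqrt{\lll n}+(\lll n)^{1/4}\left(1+\frac{\alpha^2}{2(1-\alpha)}\right)-\frac{C}{(1-\alpha)^2 n}+O(1)
\]
and
\[
k<\sqrt{\lll n}+(\lll n)^{1/4}-\frac{K}{2n}+O(1),
\]
where $C$ and $K$ are the slack/defect terms defined exactly as in~\eqref{X_ineqL} and~\eqref{ineqF}. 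Crucially, the denominators $(1-\alpha)^2 n$ and $2n$ are independent of $\lll$, so the improvement-condition~\eqref{eq41} will apply verbatim.

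Next, set $s:=\lfloor\beta n^{3/4}/\lll^{1/4}\rfloor$ and let $r,R$ be the analogues of the quantities from Section~\ref{sec_proof} on the intervals of length $s$ and $\mmm-s$. Split into three cases according to whether $r\le 2(1-\eps)(\lll n)^{1/4}$, $R\ge 2(1+\eps)(\lll n)^{1/4}$, or $2(1-\eps)(\lll n)^{1/4}<r\le R<2(1+\eps)(\lll n)^{1/4}$. The average degree of the set system is $d=k\mmm/(n+\mmm-1)=(\lll n)^{1/4}+O(1)$, so Claims~\ref{cl41} and~\ref{cl42} go through with $|X|=2s=2\beta n^{3/4}/\lll^{1/4}+O(1)$ and $|d-d_X|\ge\eps(\lll n)^{1/4}$; Lemma~\ref{defect} then delivers $K\ge 2\eps^2\beta\,n(\lll n)^{1/4}+O(n)$, improving $k$ by at least $\eps^2\beta(\lll n)^{1/4}$. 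In the balanced case the middle segment $[s+1,\mmm-s]\cup[n+1-\mmm+s,n-s]$ contains fewer than $4\eps(\lll n)^{1/4}$ elements of $A$; the ``baseline'' maximum small-order difference is $\MMM(k-(\MMM+1)/2)/\lll\approx(1-\alpha)n^{3/4}/\lll^{1/4}$; and each pair $(a_i,a_j)$ with $a_i\le s$, $a_j\ge\mmm-s$, $j\le i+\MMM$ (and its right-hand mirror) has difference $\ge\mmm-2s$, exceeding the baseline by $(\alpha-2\beta)n^{3/4}/\lll^{1/4}$. Counting such pairs as in Claim~\ref{cl43} produces $C\ge(1-\alpha-2\eps)^2(\alpha-2\beta)\,n(\lll n)^{1/4}+O(n)$, and the Lindstr\"om-type inequality above improves $k$ by at least $[2(1-\alpha-2\eps)^2(\alpha-2\beta)-\alpha^2(1-\alpha)]/[2(1-\alpha)^2]\cdot(\lll n)^{1/4}$. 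Taking the minimum of the two case-wise improvements reproduces the left-hand side of~\eqref{eq41} exactly, so the same choice $\alpha=0.137$, $\beta=0.037$, $\eps=0.235$, $\gamma=0.002$ used for Theorem~\ref{mainthm} shows $\gamma_\lll\ge 0.002$ uniformly in $\lll$, once $n>n_\lll$.

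The main obstacle is the $\lll$-factor bookkeeping: one has to check carefully that the minimum of $\sum(a_j-a_i)$ over $\lll$-thin sequences genuinely scales as $N^2/(2\lll)$ (with a lower-order $O(N)$ correction coming from the pigeonhole packing), and that the gain from ``large'' pairs, bounded by the excess of each such difference over the packed baseline $N/\lll$, really does total $\Omega(n(\lll n)^{1/4})$. Conceptually no new idea is required beyond Section~\ref{sec_proof}; the exact cancellation of $\lll$ in the two slack inequalities is what allows the same explicit constants to yield the same $\gamma_\lll$ for every fixed $\lll$.
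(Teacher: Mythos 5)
Your proof is correct, and it faithfully reconstructs the argument the authors indicated but omitted (the paper states only that ``The method of Section 4 gives the following improved upper bound; we omit its proof''). You correctly identify the two structural changes---the Lindstr\"om lower bound on the sum of small-order differences scales as $N^2/(2\lll)$ rather than $N^2/2$ because each difference value now repeats up to $\lll$ times, and the translate intersections satisfy $|A_i\cap A_j|\le\lll$, giving Johnson's inequality with $\lll$ in place of $1$---and you choose $\MMM=\lfloor(1-\alpha)(\lll n)^{1/4}\rfloor$, $\mmm=\lfloor n^{3/4}/\lll^{1/4}\rfloor$, $s=\lfloor\beta n^{3/4}/\lll^{1/4}\rfloor$ so that the two slack inequalities reduce, up to $O(1)$ error, to exactly the same form as~\eqref{X_ineqL2} and~\eqref{X_ineqF2} with $n$ replaced by $\lll n$; consequently condition~\eqref{eq41} applies verbatim and the same explicit $\alpha,\beta,\eps,\gamma$ work. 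The only minor caveat worth making explicit is that the $O(1)$ and $O(n)$ error terms in the expansions carry implicit $\lll$-dependence (e.g.\ $\lll^{5/4}n^{1/4}$ and $\lll^{3/4}n^{-1/4}$ appear), which is harmless since the theorem fixes $\lll$ and only requires $n>n_\lll$; this is exactly why $n_\lll$ must depend on $\lll$ even though the nominal $\gamma$ does not.
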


\subsection{Upper bound calculation}

\begin{lemma}[Finding an integer]\label{le62}
Suppose that $f(x):= \alpha x^2 +\beta x + \gamma$, where  $\alpha, \gamma>0$, $\beta<0$ are real numbers.
If $\beta^2-4\alpha \gamma -\alpha^2>0$ then there exists a positive integer $\mmm $ with $f(\mmm)<0$.
\end{lemma}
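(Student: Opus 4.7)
\smallskip
\noindent\textbf{Proof plan for Lemma~\ref{le62}.}
The plan is to show that under the given hypotheses the quadratic $f$ has two distinct real roots that are both positive and that lie farther than $1$ apart, so any open interval of length exceeding $1$ inside the positive reals must contain a positive integer $m$, at which $f(m)<0$ since $\alpha>0$.

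First I would verify that $f$ has two real roots. The discriminant is $\beta^2-4\alpha\gamma$, and the hypothesis $\beta^2-4\alpha\gamma-\alpha^2>0$ (together with $\alpha>0$) gives $\beta^2-4\alpha\gamma>\alpha^2>0$. Write the roots as
\[
x_\pm \;=\; \frac{-\beta\pm\sqrt{\beta^2-4\alpha\gamma}}{2\alpha}.
\]
Next I would check positivity: by Vieta's formulas, $x_-x_+=\gamma/\alpha>0$ and $x_-+x_+=-\beta/\alpha>0$ (using $\alpha,\gamma>0$ and $\beta<0$), so both roots are strictly positive. Consequently $x_-<x_+$ lie in $(0,\infty)$ and $f(x)<0$ precisely on the open interval $(x_-, x_+)$.

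Then I would estimate the length of that interval:
\[
x_+-x_-\;=\;\frac{\sqrt{\beta^2-4\alpha\gamma}}{\alpha}\;>\;\frac{\sqrt{\alpha^2}}{\alpha}\;=\;1,
\]
using the hypothesis $\beta^2-4\alpha\gamma>\alpha^2$ in the middle inequality. Any open interval of length strictly greater than $1$ in $\mathbb{R}$ contains at least one integer, so there exists an integer $m$ with $x_-<m<x_+$. Since $x_->0$, this $m$ is a positive integer, and $f(m)<0$ as required.

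The proof is entirely routine; there is no substantial obstacle. The only point worth noting carefully is that the sharp quantitative hypothesis (the extra $-\alpha^2$ in $\beta^2-4\alpha\gamma-\alpha^2>0$) is exactly what converts ``the roots are real and distinct'' into ``the gap between the roots exceeds $1$,'' which is the precise condition needed to force a lattice point into the interval.
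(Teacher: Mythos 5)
Your proof is correct and follows essentially the same route as the paper: real distinct roots, both positive, gap exceeding $1$, hence a positive integer in between. The only cosmetic difference is that you establish positivity of the roots via Vieta's formulas, whereas the paper observes directly that $f(x)>0$ for all $x\le 0$; both are immediate from $\alpha,\gamma>0$, $\beta<0$.
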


\begin{proof} The equation $f(x)=0$ has two distinct real roots $\mmm _1$ and $\mmm _2$, we may suppose $\mmm _2>\mmm_1$.
These roots are positive since $f(x)>0$ for all $x\leq 0$.
Furthermore,  $f(x)<0$ if and only if $\mmm _1< x< \mmm_2$.
If $\mmm _2-m_1>1$ then we find a positive integer $\mmm $ between $\mmm _1$ and $\mmm _2$.
This condition is equivalent to the constraint Lemma~\ref{le62}.
   \end{proof}

\begin{claim}\label{le63}
Given $n\geq  \lll \geq 1$ define $\kappa:= \sqrt{\lll n} + (\lll n)^{1/4} +\frac{1}{2}$.
Then there is a positive integer $\mmm $ such that
\begin{equation}\label{eq60}
 (n+\mmm -1) (\lll m + \kappa -\lll) < \kappa^2 \mmm.
\end{equation}
\end{claim}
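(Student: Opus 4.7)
The plan is to expand the inequality (\ref{eq60}) as a quadratic inequality in $\mmm$ and then invoke Lemma~\ref{le62}. Multiplying out and collecting terms, (\ref{eq60}) is equivalent to
\begin{equation*}
\lll\,\mmm^2 + B\,\mmm + C < 0, \qquad B := \lll n + \kappa - 2\lll - \kappa^2, \quad C := (n-1)(\kappa - \lll).
\end{equation*}
Setting $\alpha = \lll$, $\beta = B$, $\gamma = C$ in Lemma~\ref{le62} reduces the task to checking $B < 0$, $C > 0$, and the discriminant bound $B^2 - 4\lll C - \lll^2 > 0$. The degenerate case $n = \lll = 1$ forces $C = 0$ and falls outside the lemma, but then $\kappa = \tfrac{5}{2}$ and a direct check shows $\mmm = 1$ already satisfies (\ref{eq60}), so I can assume $n \geq 2$ from here on.

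To streamline the algebra I would substitute $t := (\lll n)^{1/4}$, so that $\kappa = t^2 + t + \tfrac{1}{2}$ and
\begin{equation*}
\kappa^2 = t^4 + 2t^3 + 2t^2 + t + \tfrac{1}{4}, \qquad B = -2t^3 - t^2 - 2\lll + \tfrac{1}{4}.
\end{equation*}
The negativity of $B$ is then immediate, and $C > 0$ follows from $n \geq 2$ together with $\kappa > \sqrt{\lll n} \geq \lll$.

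The main obstacle is the discriminant condition, because $B^2$ and $4\lll C$ agree to two leading orders, both behaving like $4t^6 + 4t^5$. I would square $-B = 2t^3 + t^2 + (2\lll - \tfrac{1}{4})$ and expand $4\lll C = 4(t^4 - \lll)(t^2 + t + \tfrac{1}{2} - \lll)$ in full, verify that the $4t^6$ and $4t^5$ contributions cancel identically, and collect coefficients in $t$ to arrive at the closed form
\begin{equation*}
B^2 - 4\lll C - \lll^2 = (4\lll - 1)t^4 + (8\lll - 1)t^3 + \bigl(8\lll - \tfrac{1}{2}\bigr)t^2 + 4\lll\,t + \lll - \lll^2 + \tfrac{1}{16}.
\end{equation*}

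To finish, I would verify positivity of this expression for all $\lll \geq 1$. Every coefficient of a positive power of $t$ is strictly positive when $\lll \geq 1$, so the only potentially negative contribution is the constant $\lll - \lll^2 + \tfrac{1}{16}$. Using $t^2 = \sqrt{\lll n} \geq \lll$ (which is precisely the hypothesis $n \geq \lll$), the $t^2$-term alone contributes at least $(8\lll - \tfrac{1}{2})\lll = 8\lll^2 - \tfrac{\lll}{2}$, which absorbs $\lll^2$ with a strictly positive surplus $7\lll^2 + \tfrac{\lll}{2} + \tfrac{1}{16}$. Lemma~\ref{le62} therefore applies and produces the required positive integer $\mmm$.
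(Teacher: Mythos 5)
Your proposal is correct and follows essentially the same approach as the paper: rewrite \eqref{eq60} as a quadratic in $\mmm$, reduce to Lemma~\ref{le62}, and verify the sign conditions plus the discriminant inequality $\beta^2 - 4\alpha\gamma - \alpha^2 > 0$. The only differences are stylistic: the paper verifies the discriminant by factoring $\sqrt{n\lll}$ out of $\tfrac{1}{2}|\beta|$ and comparing, whereas you substitute $t=(\lll n)^{1/4}$, expand the full polynomial, and use $t^2\geq\lll$; you also take the extra care of handling the degenerate case $n=\lll=1$ (where $\gamma=C=0$ and Lemma~\ref{le62} does not strictly apply), which the paper passes over.
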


\begin{proof}
Rearranging~\eqref{eq60} one can see that we need a positive integer $\mmm $ satisfying $f(\mmm)<0$ where
  $f(x):= \lll x^2+(n\lll -\kappa^2+\kappa-2\lll)x +(\kappa-\lll)(n-1)$.
We claim that one can apply Lemma~\ref{le62} to this polynomial $f(x)$, where
$\alpha:=\lll$, $\gamma:= (\kappa-\lll)(n-1)$ are both  positive and
\[ \beta= n\lll -\kappa^2+\kappa-2\lll = -2(n\lll)^{3/4} -\sqrt{n\lll} -2\lll +\frac{1}{4} <0.
\]
Finally,  the main constraint in Lemma~\ref{le62} is satisfied since
\begin{multline*}
\frac{1}{2}|\beta| = (n\lll)^{3/4} +\frac{1}{2}\sqrt{n\lll} +\lll -\frac{1}{8}>
 \sqrt{n\lll}\left((n\lll)^{1/4} +\frac{1}{2}\right) \\ =\sqrt{n\lll} \sqrt{k-\frac{1}{4}}
  > \sqrt{  \lll (n-1) (k-\lll) + \frac{1}{4}\lll^2}.
   \end{multline*}
   \end{proof}

\begin{proof}[Proof of~\eqref{eq62}]
Let $A\subset [n]$ be a $k$-element $\lll$-thin Sidon set.
Suppose, on the contrary, that $k\geq \kappa:= \sqrt{\lll n} + (\lll n)^{1/4} +\frac{1}{2}$.  Claim~\ref{le63} implies that
 there exists a positive $\mmm $ such that
\[ n+\mmm -1 < \frac{\kappa^2 \mmm}{\lll \mmm + \kappa -\lll}.
\]
The function $g(x):= x^2\mmm /(\lll m +x-\lll)$ is strictly increasing for $x>0$,
so we obtain that $n+\mmm -1< g(k)$.

Consider the hypergraph $\cA:= \{ A_1, \dots, A_\mmm \}$ with vertex set $[n+\mmm -1]$ where $A_i:=A+(i-1)$.
Theorem~\ref{setsthm} gives $n+\mmm -1\geq g(k)$,   a contradiction.
   \end{proof}

\subsection{A lower bound construction from finite fields}\label{ss62}
The definition of $\lll$-thin Sidon sets naturally extends to every commutative group.
For completeness we recall the construction from~\cite{CMT} that
 in case of $q$ is a power of a prime number and $\lll$ divides $q-1$, one has
 \begin{equation}\label{eq61}
   S_\lll(\ZZ_{(q^2-1)/\lll})\geq q.
 \end{equation}
For these $n=(q^2-1)/\lll$ we have $S_\lll(\ZZ_n)\geq \sqrt{\lll n +1}$.
Since the set of primes  which are $\, \equiv 1 \mod \lll$ are sufficiently ``dense"
 among the integers and trivially
$S_\lll(n)\geq S_\lll(\ZZ_n)$, one concludes the result in Theorem~\ref{thm61}.

To show~\eqref{eq61}  suppose that $\lll$ divides $q-1$.
Let $\FF_q$ be the finite field of size $q$, $g$ a primitive element of $\FF_{q^2}$.
 Define
\begin{equation}\label{eq64}
    A_{q, \lll}:= \{ x\in [(q^2-1)/\lll]:  g^{x+ y(q^2-1)/t} -g  \in \FF_q \textrm{ for some }y\in [t]\}.
\end{equation}
This is a straightforward modification of the Bose--Chowla~\cite{B42,BC} construction (the case $\lll=1$) and one can show that
 $A_{q, \lll}$ is a
$\lll$-thin Sidon set in $\ZZ_{(q^2-1)/\lll}$.
 \hfill \qed

\medskip

{\bf Acknowledgements:}
We thank to Bernard Lidicky assisting the optimization of formulaes.

\newpage
\section{Appendix, More details for the lower bound construction}

Recall a few definitions from introductory abstract algebra.
Let $q$ be a power of a prime, $\FF_q=(\FF_q, +, \cdot)$ the finite field of size $q$, $\FF^*_q= \FF\setminus \{ 0\}$.
For every element $x\in \FF^*$ we have $x^{q-1}=x^0=1$.
There are elements $g$ such that $\FF^* = \{ g, g^2, \dots, g^{q-1}\}$, these are called {\em primitive} elements of $\FF$.
Then $(\FF^*, \cdot)$ is a cyclic group $\ZZ_{q-1}$.

From now on, we suppose that $g$ is a primitive element of $\FF^*_{q^2}$.
The elements of the form $g^{y(q+1)}$, where  $y\in [q-1]$,  corresponds to  $F_q^*$.
The Bose--Chowla Sidon set $A_q\subset [q^2-1]$ is defined as \begin{equation}\label{eq73}
    A_q:= \{ a: 1\leq a\leq q^2-1, \, g^a= g+ f\textrm{ with } f\in \FF_q\}.
\end{equation}
Bose and Chowla~\cite{B42,BC} showed that $A_q$ is a $q$-element Sidon set in $\ZZ_{q^2-1}$, i.e.,  the numbers $\{ a-a': a, a'\in A, a\neq a'\}$ are all distinct $\mod (q^2-1)$. The properties of the set $A_q$ can be found in many places, e.g., in Chapter 27 of the excellent textbook of van Lint and Wilson~\cite{vLW}.

To show~\eqref{eq61}, from now on, suppose that $\lll$ divides $q-1$.
Define
\begin{equation}\label{eq74}
    A_{q, \lll}:= \{ a': 1\leq a'\leq (q^2-1)/\lll, \, \  a' \equiv a \mod((q^2-1)/\lll) \textrm{ for some }a\in A_q\}.
\end{equation}
This is a $\lll$-thin Sidon set in $\ZZ_{(q^2-1)/\lll}$.
The main steps of the proof of this statement  are as follows.
Given $(u,v)\in \FF_q\times \FF_q\setminus \{ (0,0)\}$, one can define the $q$-element set
  $L(u,v):=\{ (x,y): ux+vy=1, \ x,y\in \FF_q\}$.
Each element of $F_{q^2}$ can be uniquely written in the form $x+\theta y$, $x,y\in \FF$, where $\theta$ is the element we join to $\FF_q$ to obtain $\FF_{q^2}$ and defined by an equation $\theta ^2=s$, where $s\in \FF^*_q$ is a quadratic non-residue.
The elements of $L(u,v)$ define the set $E(u,v):= \{ x+\theta y: \ (x,y)\in L(u,v)\}$, a subset of $\FF^*_{q^2}$.
The elements of $E(u,v)$ corresponds to a set $A(u,v):= \{ a: g^a\in E(u,v)\}$. We have $A(u,v)\subset \ZZ_{q^2-1}$.
A crucial fact is that for every  $(u,v)\in \FF_q\times \FF_q\setminus \{ (0,0)\}$ there exists a
  $(u',v')\in \FF_q\times \FF_q\setminus \{ (0,0)\}$ such that $g E(u,v)= E(u',v')$. In other words $A(u,v)+1= A(u',v') \mod (q^2-1)$.

Let $\cL$ (and $\cE$, and $\cA$) be the $q$-uniform hypergraph formed by the $q^2-1$ hyperedges of the form $L(u,v)$
 ($E(u,v)$, $A(u,v)$, respectively). These hypergraphs are $q$-uniform and $q$-regular and they are cyclic, i.e., there is
 an ordering of the vertices of $\cA$, namely the natural cyclic ordering of $\ZZ_{q^2-1}$, such that $x\to x+1$ defines an automorphism.
As Bose notes, his construction is only an affine analog of Singer's~\cite{S38}, the hypergraph $\cL$ can be considered as an incomplete set of lines of an affine geometry.
Each positive integer $x\in \{1, 2, \dots, q^2-1\}$ occurs as a difference $a-a'$, $a,a'\in A_q$ exactly once, unless $x$ is divisible
 by $q+1$. In the latter case $x$ cannot be a difference of the form $a-a'$.

After this preparation, we sketch  the proof of the promised  properties of $A_{q, \lll}$, we  follow~\cite{F96}.
Let $H\subset \FF^*_q$ be a subgroup of the multiplicative group containing $\lll$ elements.
Two pairs $(u_1,v_1),$ $(u_2,v_2) \in\FF_q\times \FF_q\setminus \{ (0,0)\}$  are equivalent
if and only if there exists an $h\in H$ such that $u_1=hu_2$ and $v_1=hv_2$.
Each equivalence class contains exactly $\lll$ pairs.
We write $\langle u,v\rangle $ for the  equivalence class containing the pair $(u,v)$.
Each $\langle u,v\rangle $ defines $t$ elements in $F^*_{q^2}$, $F\langle u,v\rangle:=\{ x+\theta y: (x,y)\in \langle u,v\rangle\} $, and another $t$-element set from $Z_{q^2-1}$, $Z\langle u,v \rangle:=\{ a: g^a\in F\langle u,v\rangle \}$.
The members of $Z\langle u,v \rangle$ have the same residue $\mod (q^2-1)/t$, and among them there is a unique element
 $z\langle u,v \rangle$ such that $1\leq z\leq (q^2-1)/t$.
The line $L(u,v)$ meets exactly $q$ classes, let $A\langle u,v\rangle := \{ z\langle x,y\rangle:  (x,y)\in L(u,v) \}$. We have $A \langle u,v \rangle \subset \ZZ_{(q^2-1)/\lll}$.
It remains to the reader to show that the hypergraph $\cA_{q,\lll}:= \{ A\langle u,v\rangle : \langle u,v\rangle \in \FF_q\times \FF_q\setminus \{ (0,0)\}/H \}$ is $q$-uniform, $q$-regular, cyclic and
each positive integer $x\in \{1, 2, \dots, (q^2-1)/\lll \}$ occurs as a difference $a-a'$, $a,a'\in A_{q,\lll}$ exactly $\lll$ times unless $x$ is divisible by $q+1$. \hfill \qed

\medskip

\noindent{\bf Remark.} In~\cite{F96} it was also observed that $\cA_{q,\lll} $ possesses a polarity relation $\langle u,v\rangle \leftrightarrow \langle x,y\rangle$ if $ux+vy=1$, in other words its incidency matrix could be symmetric. The latest on this see Livinsky~\cite{L21}.

\end{document}